 \newtheorem{theorem}{Theorem}[section]
 \newtheorem{lemma}[theorem]{Lemma}
  \newtheorem{remark}[theorem]{Remark}
 \newtheorem{conjecture}[theorem]{Conjecture}
\theoremstyle{definition}
\newtheorem{definition}[theorem]{Definition}
\newtheorem{example}[theorem]{Example}
\DeclareMathOperator{\Gal}{Gal}            
\DeclareMathOperator{\ab}{ab}
\DeclareMathOperator{\Pl}{Pl}
\DeclareMathOperator{\lc}{lc}
\newcommand{\Q}{\mathbb{Q}}
\newcommand{\Z}{\mathbb{Z}}
\newcommand{\N}{\mathbb{N}}
\newcommand{\Zl}{\mathbb{Z}_{\ell}}
\newcommand{\LL}{\Lambda}
\newcommand{\M}{\mathfrak{M}}
\newcommand{\p}{\mathfrak{p}}
\newcommand{\x}{\chi}
\newcommand{\Rc}{\mathcal{R}}
\newcommand{\Uc}{\mathcal{U}}
\newcommand{\Clog}[1]{\widetilde{C\ell}_{#1}}
\newcommand{\mut}{\widetilde{\mu}}
\newcommand{\lat}{\widetilde{\lambda}}
\newcommand{\et}{\widetilde{e}}
\newcommand{\isom}{\simeq}
\def\sectionnam{\@empty}
\def\subsectionnam{\@empty}
\begin{document}

\title[Topological behaviour of logarithmic invariants]{Topological behaviour of logarithmic invariants}%

\author{Jos\'e-Ibrahim Villanueva-Guti\'errez}%

\address{Universit\"at Heidelberg\\ Mathematisches Institut\\
Im Neuenheimer Feld 205 \\69120 Heidelberg, Germany.} \email{jgutierrez@mathi.uni-heidelberg.de}
\urladdr{https://www.mathi.uni-heidelberg.de/\textasciitilde jgutierrez}
\thanks{This paper was written partially under the financial support of CONACYT (The Mexican Council of Science and Technology) as part of the author's PhD project.}
\date{\today}%
\keywords{Iwasawa invariants, logarithmic class groups, logarithmic invariants} 
\subjclass{11R23}

\begin{abstract}
Let $\ell$ be a rational prime number and $K$ a number field. We prove that the logarithmic module $X_{d}$ attached to a $\mathbb{Z}_{\ell}^{d}$-extension $K_{d}$ of $K$ is a noetherian $\Lambda_{d}$-module. Moreover, under the Gross-Kuz'min conjecture we prove that it is also torsion. We exploit this fact to deduce local and global information of the logarithmic invariants $\tilde{\mu}$ and $\tilde{\lambda}$ of $\mathbb{Z}_{\ell}$-extensions. 
\end{abstract} 

\maketitle
\thispagestyle{empty}

\section{Introduction}

Let $\ell$ be a prime number and $K$ a number field. A milestone theorem in Iwasawa theory states that the exponent $e_{n}$ of the $\ell$-part of the class group attached to a finite layer $K_{n}$ of a $\Zl$-extension $K_{\infty}$ of $K$ is given by $e_{n}=\mu \ell^{n}+\lambda n + \nu$ for $\mu,\lambda\geq 0$ and $\nu$ integers, for $n$ big enough.

Great efforts have been made in many directions to understand the behaviour of the invariants and to compute them explicitly. In \cite{Greenberg73IwasawaInvariants} Greenberg considers the set $\Delta(K)$ of all $\Zl$-extensions of a number field $K$ and proves that the $\mu$ invariant is locally bounded in some dense subset of $\Delta(K)$. It is well known that $\Delta(K)$ is non-empty. He also proves that $\lambda$ is locally bounded in a neighbourhood of a $\Zl$-extension $L$ with zero $\mu(L/K)$ invariant. These results rely heavily on the fact that the Galois group $\Gal(H_{d}/K_{d})$ of the maximal unramified $\ell$-extension of the compositum $K_{d}$ of the $\Zl$-extensions of $K$ is a noetherian torsion module over the Iwasawa algebra $\Zl[[\Gal(K_{d}/K)]]$, which is the profinite group algebra of $\Gal(K_{d}/K)\isom \Zl^{d}$ for some $d$. 

In the same work Greenberg raised some questions concerning the global and maximal behaviour of Iwaswa invariants. Independently Baba\u{\i}cev and Monsky proved that the $\mu$-invariant is bounded in $\Delta(K)$ \cite{Babauicev80,Monsky81SomeInvariantsZpdExt}. Using a finer topology and generalizing a theorem of Fukuda, Kleine proved that the Iwasawa invariants are locally maximal \cite{Kleine17}.

In this work we study these results from the logarithmic arithmetic point of view. That is, our objects will be logarithmic as in \cite{Jaulent94,Villanueva18}. 

In \textit{loc. cit.} we proved that Iwasawa's theorem holds in our setting. That is, the exponent $\widetilde{e}_{n}$ of the $\ell$-part of the logarithmic class group attached to $K_{n}$ is given by $\widetilde{e}_{n}=\mut \ell^{n}+\lat n + \tilde{\nu}$ for $\mut,\lat\geq 0$ and $\tilde{\nu}$ integers, for $n$ big enough.

Analogies between the logarithmic arithmetic and classical arithmetic allows us to reproduce some of the proofs in our setting. Nonetheless some preparations must be made. Also, we like to highlight the parts where the Gross-Kuz'min conjecture is used. 

The structure of the paper goes as follows. First we start by recalling some results about $\Zl^{d}$-extensions, $\LL_{d}$-modules and logarithmic arithmetic. Further details on the Iwasawa theory of $\Zl^{d}$-extensions and $\LL_{d}$-modules can be found in \cite{Greenberg73IwasawaInvariants} and \cite[Ch. V]{Neukirch&Schmidt&Wingberg08}. For information on logarithmic arithmetic we invite the reader to consult \cite{Jaulent94,Villanueva18}. At the end of the section we recall what we know about the cyclotomic $\Zl$-extension and we introduce the topologies which will be used in this work.

Then in section \ref{Sec:LogMod} we prove that the logarithmic module $X_{d}$ asociated to a $\Zl^{d}$-extension is noetherian and prove that assuming the Gross-Kuz'min conjecture it is torsion. 

In section \ref{Sec:LGM} we prove some results on the local boundedness of the Iwasawa logarithmic invariants $\mut$ and $\lat$ of a $\Zl$-extension of $K$ with respect to Greenberg's topology and with respect to log-Kleine's topology. Finally, we also prove that $\mut$ is globally bounded on $\Delta(K)$.

\textbf{Aknowledgements:} I would like to thank Antonio Lei who introduced me to general $p$-adic Lie groups. Thanks to Michael F\"utterer, Katharina H\"ubner, S\"oren Kleine and Oliver Thomas for the useful discussions. 

\section{Preliminaries}

\subsection{$\Zl^{d}$-extensions}

Let $K$ be a number field and let $\ell$ be a rational prime number. We denote by $r$ and $c$ the number of real and pairs of complex embeddings of $K$, respectively. A $\Zl^{d}$-extension $K_{d}$ of $K$ is an infinite Galois extension such that $\Gal(K_{d}/K)$ is isomorphic to the product of $d$ copies of the $\ell$-adic integers $\Zl$. Every number field admits at least one such an extension in the special case $d=1$, namely the cyclotomic extension (e.g. \cite[\S 7.2]{Washington97}). 

Two $\Zl$-extensions $K_{\infty}/K$ and $K'_{\infty}/K$ are said to be independent over $K$ if $K_{\infty}\cap K'_{\infty}=K$. Let $K^{(1)}_{\infty},\ldots,K^{(d)}_{\infty}$ be $d$ independent $\Zl$-extensions of $K$. The composite $K_{d}:=K^{(1)}_{\infty}\cdots K^{(d)}_{\infty}$ is a Galois abelian extension with $\Gamma_{d}:=\Gal(K_{d}/K)\isom \Zl^{d}$. Endowing $\Gamma_{d}$ with the product topology induced by $\Zl$ we choose topological generators $\gamma_{1},\ldots,\gamma_{d}$ of $\Gamma_{d}$.

We have the following bounds (loc.cit. \S 5.5) for $d$, the number of independent $\Zl$-extensions of a number field $K$: $$c+1\leq d \leq [K:\Q].$$ 
More precisely it is the number of pairwise independent $\Zl$-extensions of $K$. The equality $d=c+1$ holds when $K/\Q$ is abelian and in some other cases. Leopoldt's conjecture asserts that in fact the equality $d=c+1$ holds for every number field $K$. From now on, we will not worry about the precise value of $d$. 

\begin{example} Let $K$ be a quadratic imaginary number field and let $M_{\ell}$ be the maximal $\ell$-abelian $\ell$-ramified extension of $K$. Then $\Gal(M_{\ell}/K)=\Zl^{2}\times \text{finite group}$. If $\ell$ does not divide the class group of $K$, then $\Gal(M_{\ell}/K)=\Zl^{2}$. Hence, $M_{\ell}$ contains two independent $\Zl$-extensions, namely the cyclotomic and the anti-cyclotomic $\Zl$-extension.
\end{example}

\subsection{$\LL_{d}$-modules}

Let $\LL_{d}:=\LL_{d}(\Gamma_{d})=\varprojlim_{U\unlhd \Gamma_{d}} \Zl[\Gamma_{d}/U]$  (or simply denoted $\LL$ when $d=1$ and no confusion arises) be the Iwasawa algebra of $\Gamma_{d}$. The mapping $\gamma_{i}\mapsto T_{i}+1$ defines an algebra isomorphism $\LL_{d}\isom \Zl\llbracket T_{1},\ldots,T_{d} \rrbracket $. Hence, $\LL_{d}$ is a local regular ring of dimension $d+1$, with maximal ideal $\M=(T_{1},\ldots,T_{d},\ell)$.

\begin{definition} We say that two modules $\LL_{d}$-modules $M$ and $N$ are pseudo-isomorphic if there exists a $\LL_{d}$-module homomorphism $f:M\rightarrow N$ such that localisation at every prime ideal $\p\in\LL_{d}$ of height 1 induces an isomorphism $f_{\p}:M_{\p}\rightarrow N_{\p}$. A module is pseudo-null if it is pseudo-isomorphic to 0.
\end{definition}

\begin{remark} This definition agrees with the one given in \cite[\S 13.2]{Washington97} for $d=1$, since the localisation of a finite module at a prime ideal of height 1 in $\Zl\llbracket T \rrbracket $ is 0.
\end{remark} 

The following theorem is a special case of the structure theorem for noetherian torsion modules over local regular rings.

\begin{theorem}[Structure theorem]\label{thm:structure} Every noetherian torsion $\LL_{d}$-module $M$ is pseudo-isomorphic to a direct sum as follows
$$M \sim \bigoplus_{i} \LL_{d}/P_{i}^{r_{i}}, $$
where $P_{i}$ are height 1 prime ideals of $\LL_{d}$ and $r_{i}\in\N$.
\end{theorem}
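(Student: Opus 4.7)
The plan is to reduce to the local structure of $M$ at each height one prime of $\Lambda_d$. Since $\Lambda_d \cong \Zl\llbracket T_1,\ldots,T_d\rrbracket$ is a regular local ring of Krull dimension $d+1$, the localisation $(\Lambda_d)_P$ at any height one prime $P$ is a regular local ring of dimension one, hence a discrete valuation ring. The first observation I need is that only finitely many height one primes are relevant: because $M$ is noetherian and torsion, $\Ann(M)$ is a nonzero ideal, so it has finitely many minimal primes, and those of height one are exactly the height one primes in $\mathrm{Supp}(M)$. Label them $P_1,\ldots,P_s$; at every other height one prime $Q$, $M_Q = 0$.

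Next I apply the classical structure theorem for finitely generated modules over a DVR to each $M_{P_i}$. Since $M$ is torsion, $M_{P_i}$ is a finitely generated torsion $(\Lambda_d)_{P_i}$-module, so
\[
M_{P_i} \;\cong\; \bigoplus_{j=1}^{n_i} (\Lambda_d)_{P_i}\big/P_i^{r_{i,j}}(\Lambda_d)_{P_i}
\]
for certain integers $r_{i,j} \geq 1$. This suggests the target module
\[
E \;:=\; \bigoplus_{i=1}^{s}\bigoplus_{j=1}^{n_i} \Lambda_d/P_i^{r_{i,j}},
\]
which satisfies $E_{P_i} \cong M_{P_i}$ by construction and $E_Q = 0$ for every other height one prime $Q$, so $E$ and $M$ have matching localisations at all height one primes.

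It remains to produce a single $\Lambda_d$-linear map $f\colon M \to E$ (or, equivalently, a map $E \to M$) realising these local isomorphisms simultaneously. The idea is to lift the isomorphism $M_{P_i} \cong E_{P_i}$ to a homomorphism defined after inverting a single element $s_i \notin P_i$, then use prime avoidance: since we only need to stay outside the finite set $\{P_1,\ldots,P_s\}$, we may choose an element $s$ lying outside each $P_i$ such that multiplication by some power of $s$ clears all denominators at once. The resulting map $f$ is then a well-defined $\Lambda_d$-homomorphism whose localisation at every height one prime is an isomorphism, i.e.\ a pseudo-isomorphism.

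The main obstacle is this last step: turning the individual local isomorphisms into one global map while preserving the isomorphism property at every height one prime. This is where the Krull-domain (in fact UFD) structure of $\Lambda_d$, together with prime avoidance and the finiteness of $\mathrm{Supp}(M) \cap \{\text{height one primes}\}$, plays the essential role. Once this is done, the theorem follows.
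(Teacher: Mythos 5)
The paper itself does not prove Theorem~\ref{thm:structure}; it cites it as a special case of the structure theorem for noetherian torsion modules over regular local rings (see e.g.\ Bourbaki, Alg.\ Comm.\ VII.4.5, or \cite[Ch.~V]{Neukirch&Schmidt&Wingberg08}). Your sketch is the standard argument behind that reference and is essentially correct: localise at the finitely many height-one primes in $\operatorname{Supp}(M)$, apply the elementary-divisor theorem over the resulting DVRs, and glue the local isomorphisms into a global pseudo-isomorphism. Two small points are worth sharpening. First, the gluing is cleaner than the prime-avoidance step you describe: you do not need a single $s$ lying outside every $P_i$. For each $i$ choose $s_i\notin P_i$ clearing the denominators in the chosen isomorphism $\phi_i\colon M_{P_i}\to E^{(i)}_{P_i}$, where $E^{(i)}:=\bigoplus_j \LL_{d}/P_i^{r_{i,j}}$; the composite $M\to M_{P_i}\xrightarrow{\ s_i\phi_i\ } E^{(i)}_{P_i}$ then lands inside $E^{(i)}$ (using that the natural map $E^{(i)}\to E^{(i)}_{P_i}$ is injective, which holds because $P_i$ is principal, $\LL_{d}$ being a UFD), giving a genuine $\LL_{d}$-map $f_i\colon M\to E^{(i)}$. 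With $f=\bigoplus_i f_i\colon M\to E$, one has $E^{(i)}_{P_j}=0$ for $j\neq i$ since $\LL_{d}/P_i^{r}$ is supported only at $P_i$ among height-one primes, so $f_{P_j}=(f_j)_{P_j}$ is a unit multiple of $\phi_j$ and $f$ is a pseudo-isomorphism; no prime avoidance is needed. Second, it is worth noting explicitly that $M_{P_i}$ is a finitely generated \emph{torsion} $(\LL_{d})_{P_i}$-module because $\Ann(M)\neq 0$ (your noetherian-plus-torsion observation) remains nonzero after localisation, and that the UFD property of $\LL_{d}$ (Auslander--Buchsbaum, or directly for $\Zl\llbracket T_1,\ldots,T_d\rrbracket$) is precisely what lets ordinary powers $P_i^{r_i}$ appear in the conclusion; over a general Krull domain the structure theorem yields symbolic powers $P_i^{(r_i)}$ instead.
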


In the case $d=1$ the structure theorem can be refined as follows. Every noetherian torsion $\LL$-module $M$ is pseudo isomorphic to an elementary module 
$$E:=\left(\bigoplus\limits_{i=1}^{s}\LL/\ell^{m_{i}}\LL\right)
\oplus\left(\bigoplus\limits_{j=1}^{t}\LL/P_{j}\LL\right)$$
with $P_{j}$ distinguished polynomials ordered by divisibility: $P_{1}|P_{2}|\ldots|P_{t}$. Then 
$$\x=\prod_{i=1}^{s}\ell^{m_{i}}\prod_{j=1}^{t}P_{j}, \	\	\	\mu=\sum_{i=1}^{s} m_{i} \	\	\	\textrm{ and }\	\	\	\lambda=\sum_{j=1}^{t}\deg(P_{j});$$
$\x$ is called  the characteristic polynomial of $M$, and $\mu$ and $\lambda$ its structural invariants. 
 
In the case $M=\Gal(K_{\infty}^{\lc}/K_{\infty})$ is the logarithmic $\LL$-module corresponding to a $\Zl$-extension $K_{\infty}$, we denote its structural invariants $\mut(K_{\infty})$ and $\lat(K_{\infty})$ (see \S \ref{Sec:LogMod}).  
 
Consider an epimorphism  $\pi:\LL_{d}\rightarrow \LL$ and a noetherian torsion $\LL_{d}$-module $M$. We denote $\mathfrak{A}_{\pi}$ the kernel of such an epimorphism. The quotient 
$$ M_{\pi}= M/(\mathfrak{A}_{\pi}\cdot M) $$ 
is clearly a noetherian $\LL$-module. 

\subsection{Logarithmic ramification} Let $L/K$ be a finite extension, $\mathfrak{P}$ and $\p$ be places of $L$ and $K$, respectively, with $\mathfrak{P}|\p$ and $\p|p$. We write $K_{\p}$ (resp. $L_{\mathfrak{P}}$) for the completion of $K$ at $\p$ (resp. $L$ at $\mathfrak{P}$). We denote $\widehat{\Q_{p}^{c}}$ the composite of all $\Z_{q}$-cyclotomic extensions of $\Q_{p}$ for every prime $q$.  

The logarithmic ramification index and the logarithmic inertia degree are defined as 
$$\tilde{e}_{\mathfrak{P}|\p}:=[L_{\mathfrak{P}}:\widehat{\Q_{p}^{c}}K_{\p}\cap L_{\mathfrak{P}}] \	\	\	\text{ and }\	\	\	\tilde{f}_{\mathfrak{P}|\p}:=[\widehat{\Q_{p}^{c}}K_{\p}\cap L_{\mathfrak{P}}:K_{\p}].$$ 
They satisfy the usual multiplicative relations in towers \cite[Thm. 1.4]{Jaulent94}, i.e. 
$$[L_{\mathfrak{P}}:K_{\p}]=\tilde{e}_{\mathfrak{P}|\p}\tilde{f}_{\mathfrak{P}|\p}, \	\	\	 \tilde{e}_{\mathfrak{P}}=\tilde{e}_{\mathfrak{P}|\p}
\tilde{e}_{\p}\	\	\	\text{ and }\	\	\	\tilde{f}_{\mathfrak{P}}=\tilde{f}_{\mathfrak{P}|\p}
\tilde{f}_{\p}.$$ We say that $\p$ is logarithmically ramified (log-ramified) if $\tilde{e}_{\mathfrak{P}|\p}>1$, otherwise we say $\p$ is logarithmically unramified.

If $L/K$ is an abelian $\ell$-extension, the ramification index is exactly the order of some subgroup of $\Gal(L/K)$. We recall briefly how this is done and we invite the reader to consult \cite{Jaulent94,Villanueva18} for further references. By $\ell$-adic class field theory we can identify the Galois group of the maximal abelian $\ell$-extension of $K_{\p}$ with the inverse limit $\Rc_{\p}:=\varprojlim K_{\p}^{\times}/K_{\p}^{\times,\ell^{n}}$. Hence $\Rc_{\p}$ corresponds to the decomposition subgroup of $\p$ in $K^{\ab}$, the maximal abelian $\ell$-extension of $K$. By taking the logarithmic valuation $\tilde{v}_{\p}$ \cite[\S 2]{Villanueva18}, we can decompose $\Rc_{\p}$ as a product of $\Zl$-modules $\tilde{\pi}_{\p}^{\Zl}\widetilde{U}_{\p}$, where $\widetilde{U}_{\p}$ is the kernel of $\tilde{v}_{\p}$. We call the image of $\widetilde{U}_{\p}$ in $\Gal(L/K)$ the logarithmic inertia subgroup $\tilde{I}_{\p}$. 

The locally cyclotomic extension $K^{\lc}$ of $K$ corresponds to the maximal abelian $\ell$-extension of $K$ which is logarithmically unramified (everywhere). Under the Galois correspondence it corresponds to the abelian extension fixed by the image in $\Gal(K^{\ab}/K)$ of the product $\prod \widetilde{U}_{\p}$ over the finite places of $K$ of the logarithmic unit subgroups $\widetilde{U}_{\p}$. 

\begin{remark}\label{rem:logram} Unlike the classical case, the extension $K^{\lc}$ is infinite since it contains the cyclotomic $\Zl$-extension $K^{\text{c}}$ of $K$.
\end{remark}

We will use the following result.

\begin{lemma}\label{lem:logramlram} Let $E/F$ be a Galois $\ell$-extension which is logarithmically unramified. Then $E/F$ is unramified outside $\ell$.
\end{lemma}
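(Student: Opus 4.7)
The plan is to reduce to the local setting and use the explicit description of $\widehat{\Q_{p}^{c}}$ recalled above the lemma. Fix a finite place $\p$ of $F$ lying above a rational prime $p\neq\ell$, and pick a place $\pp$ of $E$ above $\p$; it suffices to prove that the local extension $E_{\pp}/F_{\p}$ is unramified in the classical sense. The hypothesis $\tilde{e}_{\pp|\p}=1$ translates, via the very definition of the logarithmic ramification index, into the containment $E_{\pp}\subset \widehat{\Q_{p}^{c}}\,F_{\p}$. Since $E/F$ is a Galois $\ell$-extension, $E_{\pp}/F_{\p}$ is pro-$\ell$, so $E_{\pp}$ is contained in the maximal pro-$\ell$ subextension of $\widehat{\Q_{p}^{c}}\,F_{\p}/F_{\p}$.

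Next I would describe this maximal pro-$\ell$ subextension. By construction $\Gal(\widehat{\Q_{p}^{c}}/\Q_{p})\isom \prod_{q}\Z_{q}\isom\widehat{\Z}$, with one $\Z_{q}$-factor for each rational prime $q$ coming from the $\Z_{q}$-cyclotomic extension of $\Q_{p}$. The maximal pro-$\ell$ quotient of $\widehat{\Z}$ is $\Zl$, and this corresponds to the $\Zl$-cyclotomic extension $M$ of $\Q_{p}$, itself a subfield of $\Q_{p}(\mu_{\ell^{\infty}})$. The key classical input is that, since $p\neq\ell$, the extension $\Q_{p}(\mu_{\ell^{\infty}})/\Q_{p}$ is unramified; hence $M/\Q_{p}$ is unramified, and after composing with the finite extension $F_{\p}/\Q_{p}$ the extension $MF_{\p}/F_{\p}$ is still unramified and is a $\Zl$-extension.

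To conclude, $\Gal(\widehat{\Q_{p}^{c}}\,F_{\p}/F_{\p})$ embeds as an open subgroup of $\widehat{\Z}$ and is therefore again isomorphic to $\widehat{\Z}$, so its maximal pro-$\ell$ quotient is $\Zl$ and the corresponding maximal pro-$\ell$ subextension of $\widehat{\Q_{p}^{c}}\,F_{\p}/F_{\p}$ is itself a $\Zl$-extension of $F_{\p}$. Since $MF_{\p}/F_{\p}$ is already a $\Zl$-extension contained in $\widehat{\Q_{p}^{c}}\,F_{\p}$, it must coincide with this maximal pro-$\ell$ subextension. Hence $E_{\pp}\subset MF_{\p}$, an unramified extension of $F_{\p}$, so $E_{\pp}/F_{\p}$ is unramified, as required. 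The only point needing some care is the identification of the maximal pro-$\ell$ quotient of the open subgroup $\Gal(\widehat{\Q_{p}^{c}}\,F_{\p}/F_{\p})\subset\widehat{\Z}$; the fact that every open subgroup of $\widehat{\Z}$ is itself isomorphic to $\widehat{\Z}$ handles this cleanly.
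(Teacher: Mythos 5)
Your proof is correct, but it takes a genuinely different route from the paper's. The paper quotes the comparison in \cite[Thm.~1.4]{Jaulent94} stating that for $\p\mid p$ the logarithmic and classical ramification indices have the same $q$-adic valuation for every prime $q\neq p$: from $\tilde e_{\pp|\p}=1$ one gets $v_q(e_{\pp|\p})=0$ for $q\neq p$, so $e_{\pp|\p}$ could only be a power of $p$, and since $e_{\pp|\p}$ divides $[E:F]$, an $\ell$-power with $\ell\neq p$, it must equal $1$. You instead work directly from the definition $\tilde e_{\pp|\p}=[E_{\pp}:\widehat{\Q_p^c}F_\p\cap E_{\pp}]$: this places $E_{\pp}$ inside $\widehat{\Q_p^c}F_\p$, and you then identify the maximal pro-$\ell$ subextension of $\widehat{\Q_p^c}F_\p/F_\p$ as the composite with the $\Zl$-cyclotomic extension $M$ of $\Q_p$, which is unramified precisely because $p\neq\ell$ (it lives inside $\Q_p(\mu_{\ell^\infty})$). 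Both are valid; the paper's argument is shorter but treats Jaulent's valuation comparison as a black box, whereas yours is longer but self-contained, relying only on the structure of $\Gal(\widehat{\Q_p^c}/\Q_p)\isom\widehat{\Z}$ and the classical fact that tame cyclotomic extensions are unramified. One point you might make more explicit: the identification of $\Gal(\widehat{\Q_p^c}F_\p/F_\p)$ with an \emph{open} subgroup of $\widehat{\Z}$ uses that $\widehat{\Q_p^c}\cap F_\p$ is finite over $\Q_p$, which follows since $F_\p/\Q_p$ is finite.
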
 
\begin{proof} By definition we have $\et_{\mathfrak{P}|\p}=1$ for all finite places $\mathfrak{P}$ of $E$. Take a place $\p|p$ with $p\neq \ell$. By \cite[Thm. 1.4]{Jaulent94} we have $v_{q}(\et_{\mathfrak{P}|\p})=v_{q}(e_{\mathfrak{P}|\p})=0$ for every $q\neq p$. Finally, suppose that $v_{p}(e_{\mathfrak{P}|\p})\neq 0$, then $p|e_{\mathfrak{P}|\p}$ which implies that $p|[E:F]$. \end{proof}

\subsection{The logarithmic class group}

The group $\Clog{K}^{*}$ of logarithmic classes of arbitrary degree is the Galois group $\Gal(K^{\lc}/K)$ of the maximal abelian $\ell$-extension of $K$ which splits completely over the cyclotomic $\Zl$-extension $K^{c}$ of $K$. As stated before $K^{\lc}$ is the maximal abelian logarithmically unramified $\ell$-extension of $K$. The logarithmic class group $\Clog{K}$ of a number field $K$ corresponds to the Galois group $\Gal(K^{\lc}/K^{\text{c}})$.

The logarithmic class group is conjectured to be finite for every number field $K$ and prime $\ell$. 

\begin{conjecture}[Gross-Kuz'min conjecture] The group $\Clog{K}^{*}$ is a $\Zl$-module of rank 1. Equivalently, the logarithmic class group $\Clog{K}$ is a finite group. 
\end{conjecture}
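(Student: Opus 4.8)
Only the equivalence between the two assertions is something one can establish here; the conjecture is precisely the claim that these (equivalent) statements hold. So the goal is to show that $\Clog{K}^{*}$ has $\Zl$-rank $1$ if and only if $\Clog{K}$ is finite. Write $K^{c}$ for the cyclotomic $\Zl$-extension of $K$. Since $K^{c}\subseteq K^{\lc}$ by Remark~\ref{rem:logram} and $K^{\lc}/K$ is abelian and pro-$\ell$, restriction along the tower $K\subseteq K^{c}\subseteq K^{\lc}$ produces a short exact sequence of $\Zl$-modules
$$0\longrightarrow \Clog{K}\longrightarrow \Clog{K}^{*}\longrightarrow \Gal(K^{c}/K)\longrightarrow 0 ,$$
using $\Clog{K}^{*}=\Gal(K^{\lc}/K)$, $\Clog{K}=\Gal(K^{\lc}/K^{c})$ and $\Gal(K^{c}/K)\isom\Zl$.

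The plan is then to compare $\Zl$-ranks across this sequence, for which I first need that $\Clog{K}^{*}$ — and hence its submodule $\Clog{K}$ — is a finitely generated $\Zl$-module. This I would deduce from Lemma~\ref{lem:logramlram}: the extension $K^{\lc}/K$ is logarithmically unramified, hence unramified outside $\ell$, and being abelian and pro-$\ell$ it is therefore contained in the maximal abelian $\ell$-extension $M_{\ell}$ of $K$ unramified outside $\ell$; by class field theory $\Gal(M_{\ell}/K)$ is a finitely generated $\Zl$-module (its free part has rank controlled by the local units at the places above $\ell$), and $\Clog{K}^{*}$ is a quotient of it. Alternatively one may simply quote from Jaulent's work that $\Clog{K}$ is a finitely generated $\Zl$-module.

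Given finite generation, applying the exact functor $-\otimes_{\Zl}\Ql$ to the short exact sequence yields $\rank_{\Zl}\Clog{K}^{*}=\rank_{\Zl}\Clog{K}+1$. Hence $\Clog{K}^{*}$ has $\Zl$-rank $1$ exactly when $\Clog{K}$ has $\Zl$-rank $0$, and a finitely generated $\Zl$-module of rank $0$ is precisely a finite group; this gives the asserted equivalence. I do not foresee a genuine obstacle: the only non-formal input is the finite generation of $\Clog{K}^{*}$ over $\Zl$, and once that is available the rest is additivity of $\Zl$-rank in short exact sequences together with the structure theory of finitely generated $\Zl$-modules.
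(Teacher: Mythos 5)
You correctly recognise that this is a \emph{conjecture}: the paper does not, and cannot, prove it, and the only thing one can establish is the claimed equivalence of the two formulations. The paper itself leaves that equivalence unjustified (it is simply asserted with the word ``Equivalently''), so there is no paper argument to compare against; your task is really to supply the missing bridge, which you do.

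Your argument for the equivalence is correct and uses exactly the right ingredients from the paper. The Galois-theoretic short exact sequence
$$0\longrightarrow \Clog{K}\longrightarrow \Clog{K}^{*}\longrightarrow \Gal(K^{c}/K)\longrightarrow 0$$
follows from $K\subseteq K^{c}\subseteq K^{\lc}$ (Remark~\ref{rem:logram}) together with the definitions $\Clog{K}^{*}=\Gal(K^{\lc}/K)$ and $\Clog{K}=\Gal(K^{\lc}/K^{c})$. Finite generation over $\Zl$ is the one genuinely non-formal input, and your route through Lemma~\ref{lem:logramlram} is exactly the one consistent with the paper's toolkit: $K^{\lc}/K$ is an abelian pro-$\ell$ extension unramified outside $\ell$, hence $\Clog{K}^{*}$ is a quotient of the finitely generated $\Zl$-module $\Gal(M_{\ell}/K)$ afforded by class field theory. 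With finite generation in hand, flatness of $\Ql$ over $\Zl$ gives $\rank_{\Zl}\Clog{K}^{*}=\rank_{\Zl}\Clog{K}+1$, and a finitely generated $\Zl$-module of rank zero is finite. Nothing is missing; this cleanly supports the ``Equivalently'' the paper takes for granted.
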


The conjecture holds if $K/\Q$ is an abelian extension and has been proved in some other cases. Additionally it can be verified numerically for pairs $(K,\ell)$ thanks to the implementation of the logarithmic class group computation in PARI/GP \cite{Belabas&Jaulent16}. 

\begin{lemma}\label{lem:logramZlext} Let $K_{d}/K$ be a $\Zl^{d}$-extension with $d\geq2$. Suppose that $K$ satisfies the Gross-Kuz'min conjecture. Then every prime $\p$ of $K$ not dividing $\ell$ is (log)-unramified in $K_{d}/K$ and at least one prime $\p|\ell$ (log)-ramifies in $K$.
\end{lemma}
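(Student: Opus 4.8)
The plan is to prove the statement in two parts. First, that no prime $\p \nmid \ell$ log-ramifies in $K_d/K$; second, that at least one prime above $\ell$ log-ramifies.

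For the first part, I'd argue by contradiction using Lemma~\ref{lem:logramlram}. Suppose some $\p \nmid \ell$ is log-ramified in $K_d/K$. The logarithmic inertia subgroup $\tilde I_\p \subseteq \Gamma_d \isom \Zl^d$ is then nontrivial, hence infinite (a nonzero closed subgroup of $\Zl^d$ contains a copy of $\Zl$). Let $E$ be the subfield of $K_d$ fixed by $\tilde I_\p$, so $\Gal(K_d/E) = \tilde I_\p$ is the full log-inertia at $\p$; then $K_d/E$ is... — wait, I need the *maximal log-unramified* subextension. Let me reconsider: set $E$ = fixed field of the closed subgroup generated by all log-inertia subgroups $\tilde I_\q$ for $\q \nmid \ell$. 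Then $E/K$ is log-unramified outside $\ell$ in the sense that... hmm, actually I want $E/K$ to be log-unramified everywhere at primes not over $\ell$, but log-inertia at primes over $\ell$ could still be nontrivial. That's fine — I want to contradict finiteness of $\Clog{K}$.

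Actually, let me reconsider the structure. The cleanest route: Let $E/K$ be the maximal abelian sub-extension of $K_d/K$ that is log-unramified at all places. By the class-field-theoretic description, $E \subseteq K^{\lc}$, so $\Gal(E/K)$ is a quotient of $\Clog{K}^* = \Gal(K^{\lc}/K)$. Now $\Clog{K}^*$ fits in $0 \to \Clog{K} \to \Clog{K}^* \to \Zl \to 0$ with $\Clog{K}$ finite (Gross–Kuz'min), so $\Clog{K}^*$ has $\Zl$-rank exactly $1$. Hence $\Gal(E/K)$ has $\Zl$-rank $\le 1$, i.e. $\Gal(K_d/E)$ has $\Zl$-rank $\ge d-1$. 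The subgroup $\Gal(K_d/E)$ is generated by all the log-inertia subgroups $\tilde I_\p$ over all places $\p$. Since $K_d/K$ is (classically) unramified outside $\ell$ (being a $\Zl^d$-extension, only primes over $\ell$ can ramify — actually I should recall: a $\Zl^d$-extension is unramified outside $\ell$ by the usual argument with finite residue fields, or cite it), and by Lemma~\ref{lem:logramlram}'s proof the log-inertia at $\p \nmid \ell$ injects into the classical inertia... more precisely $\et_{\pp|\p}$ and $e_{\pp|\p}$ have the same $q$-adic valuation for $q \ne p$ and $\et \mid e \cdot (\text{power of } p)$, so at $\p \nmid \ell$ in an $\ell$-extension, $\tilde I_\p$ is trivial iff $I_\p$ is trivial — hence the log-inertia at $\p \nmid \ell$ is trivial because the classical inertia is. So $\Gal(K_d/E)$ is generated by log-inertia at primes over $\ell$ only.

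Now the main obstacle and the punchline: $K$ has only finitely many primes over $\ell$, and for each such $\p$, the local log-inertia $\tilde I_\p$ is a quotient of the log-unit group $\widetilde U_\p \subseteq \Rc_\p$; since $\Rc_\p$ has $\Zl$-rank $[K_\p:\Qp] + 1$ and $\widetilde U_\p = \ker \tilde v_\p$ has $\Zl$-rank $[K_\p:\Qp]$, each $\tilde I_\p$ has $\Zl$-rank at most $[K_\p:\Qp]$, and summing, $\Gal(K_d/E)$ has $\Zl$-rank at most $\sum_{\p|\ell}[K_\p:\Qp] = [K:\Q]$, which is consistent with $d \le [K:\Q]$ but doesn't immediately give ramification. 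The real point forcing *some* $\p|\ell$ to log-ramify: if *no* prime log-ramified in $K_d/K$, then $K_d/K$ would be log-unramified everywhere, so $K_d \subseteq K^{\lc}$, giving $\Gamma_d = \Zl^d$ as a quotient of $\Clog{K}^*$, contradicting $\Zl$-rank$(\Clog{K}^*) = 1 < d$. Combined with the first part (no $\p \nmid \ell$ log-ramifies), the log-ramified prime must lie over $\ell$. That's the whole argument — I expect the fiddliest step to be the careful comparison of log-inertia versus classical inertia at $\p \nmid \ell$, essentially repackaging Lemma~\ref{lem:logramlram}.

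Let me also double check the first part isn't circular: "every prime $\p \nmid \ell$ is log-unramified in $K_d/K$." Take $\p \nmid \ell$. In a $\Zl^d$-extension the inertia group $I_\p \subseteq \Gamma_d$ at $\p$: the inertia in the completion is the Galois group of a totally ramified extension of $K_\p$ inside an abelian $\ell$-extension; tame part is cyclic of order prime to $\ell$ hence trivial, wild part is trivial since $p \ne \ell$. So $I_\p = 1$, $\p$ is classically unramified. By Lemma~\ref{lem:logramlram} read in reverse — actually that lemma goes the other way (log-unram $\Rightarrow$ unram outside $\ell$). I need: classically unramified at $\p \nmid \ell$ $\Rightarrow$ log-unramified at $\p$. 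From \cite[Thm 1.4]{Jaulent94}: $[L_\pp : K_\p] = \et_{\pp|\p}\tilde f_{\pp|\p} = e_{\pp|\p} f_{\pp|\p}$, and $v_q(\et) = v_q(e)$ for $q \ne p$, while the $p$-parts can differ. In an $\ell$-extension with $\p \nmid \ell$: $e_{\pp|\p} = 1$, so $v_q(\et_{\pp|\p}) = 0$ for all $q \ne p$, and $\et \mid [L_\pp:K_\p]$ which is a power of $\ell \ne p$, so $v_p(\et) = 0$ too; hence $\et_{\pp|\p} = 1$, log-unramified. Good, that works cleanly.

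Here is the proof.

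\begin{proof}
We first show that every prime $\p \nmid \ell$ is log-unramified in $K_d/K$. Let $\pp$ be a place of $K_d$ above such a $\p$, and say $\p \mid p$ with $p \neq \ell$. Since $K_d/K$ is a pro-$\ell$ extension, the completion $L_{\pp}/K_{\p}$ (for any finite layer $L$) is an abelian $\ell$-extension of the $p$-adic field $K_{\p}$; as $p \neq \ell$, its inertia subgroup is at once of order prime to $\ell$ (the wild inertia being trivial) and an $\ell$-group, hence trivial. Thus $e_{\pp|\p} = 1$. By \cite[Thm. 1.4]{Jaulent94} one has $v_{q}(\et_{\pp|\p}) = v_{q}(e_{\pp|\p}) = 0$ for every prime $q \neq p$, while $\et_{\pp|\p}$ divides $[L_{\pp}:K_{\p}]$, which is a power of $\ell \neq p$; therefore $v_{p}(\et_{\pp|\p}) = 0$ as well, so $\et_{\pp|\p} = 1$ and $\p$ is log-unramified in $K_d/K$.

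Now suppose, for contradiction, that no prime of $K$ log-ramifies in $K_d/K$; in particular $K_d/K$ is log-unramified at every finite place. By the class-field-theoretic description of the locally cyclotomic extension, $K_d \subseteq K^{\lc}$, so $\Gamma_d = \Gal(K_d/K)$ is a quotient of $\Clog{K}^{*} = \Gal(K^{\lc}/K)$. But the Gross--Kuz'min conjecture for $K$ asserts that $\Clog{K}^{*}$ has $\Zl$-rank $1$, whereas $\Gamma_d \isom \Zl^{d}$ has $\Zl$-rank $d \geq 2$. This is impossible, so at least one prime of $K$ log-ramifies in $K_d/K$. Combined with the first paragraph, such a prime must divide $\ell$.
\end{proof}
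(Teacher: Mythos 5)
Your overall strategy coincides with the paper's: show that primes $\p\nmid\ell$ cannot log-ramify, and observe that if no $\p\mid\ell$ log-ramified either, then $K_d\subseteq K^{\lc}$, forcing $\Zl^d$ to be a quotient of $\Clog{K}^{*}=\Gal(K^{\lc}/K)$ and contradicting the $\Zl$-rank-one content of Gross--Kuz'min. The second half of your proof is essentially the paper's verbatim.

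For the first half you take a different route from the paper. The paper works directly on the logarithmic side: for $\p\nmid\ell$ the logarithmic unit group $\widetilde{\Uc}_{\p}=\ker\tilde{v}_{\p}$ (whose image is $\tilde{I}_{\p}$) is a \emph{finite} $\ell$-group, and a finite subgroup of the torsion-free group $\Gal(K_d/K)\isom\Zl^{d}$ must vanish; classical unramifiedness is then deduced via Lemma~\ref{lem:logramlram}. You instead first try to prove $e_{\pp|\p}=1$ classically and then transfer to $\et_{\pp|\p}=1$ through \cite[Thm.\ 1.4]{Jaulent94}. The transfer step is correct. But your derivation of $e_{\pp|\p}=1$ contains a genuine error: you assert the inertia is ``of order prime to $\ell$ (the wild inertia being trivial),'' which conflates $p$ with $\ell$. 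Triviality of the wild inertia only says the inertia is tame, i.e.\ of order prime to $p$, and a tame inertia group can perfectly well be a nontrivial $\ell$-group: for $p\equiv 1\pmod{\ell}$, $\Qp(\zeta_\ell)/\Qp$ is a totally (tamely) ramified cyclic $\ell$-extension of a $p$-adic field. So the local clause ``an abelian $\ell$-extension of $K_\p$ with $p\neq\ell$ is unramified'' is false at a fixed finite layer. What actually forces $e_{\pp|\p}=1$ in a $\Zl^d$-extension is that the tame inertia is cyclic of order dividing $q_\p-1$, hence uniformly bounded in the tower; the inertia subgroup at $\p$ inside $\Gal(K_d/K)\isom\Zl^d$ is therefore finite, and a finite subgroup of $\Zl^d$ is trivial, which kills the inertia at every finite layer. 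Torsion-freeness of $\Zl^d$ is exactly the ingredient the paper invokes (on the logarithmic side), and it cannot be dispensed with; once you restore it (or simply cite the standard fact that $\Zl^d$-extensions are unramified outside $\ell$), the rest of your argument goes through.
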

\begin{proof}
Let $\p$ be a place of $K$ not dividing $\ell$. In this case the logarithmic valuation $\widetilde{v}_{\p}$ and the usual valuation $v_{\p}$ coincide \cite[\S 2]{Villanueva18}. The kernel $\widetilde{\Uc}_{\p}$ of the valuation map $\Rc_{\p}\rightarrow \Zl$, is a finite $\ell$-group which corresponds, by $\ell$-adic class field theory, to the logarithmic inertia subgroup $\tilde{I}_{\p}$ of the Galois group $\Gal(K^{\ab}/K)$ of the maximal abelian $\ell$-extension of $K$.  Therefore, the image of $\tilde{I}_{\p}$ in $\Gal(K_{d}/K)$ must be trivial, since $Gal(K_{d}/K)$ has no $\ell$-torsion. Using Lemma \ref{lem:logramlram} we get the same result for classical ramification. 

By the finiteness of the class group, we know that at least one place $\p|\ell$ must ramify. This is also the case for logarithmic ramification. If $K_{d}/K$ is logarithmically unramified, then $K_{d}$ is contained in $K^{\lc}$ which is a contradiction to the Gross-Kuz'min conjecture. \end{proof}

Notice that Remark \ref{rem:logram} implies that the preceding lemma is no longer true when $d=1$.

\subsection{The cyclotomic case}\label{sec:thecyclocase}

Let $K^{c}$ be the cyclotomic $\Zl$-extension of $K$. Let $H'_{n}$ be the $\ell$-field of $\ell$-classes of $K_{n}$, namely the maximal abelian $\ell$-extension of $K_{n}$ such that the places above $\ell$ are completely decomposed. Here $K_{n}$ is the unique subextension of $K^{c}$ with $[K_{n}:K]=\ell^{n}$. Class field theory identifies $\Gal(H'_{n}/K_{n})$ with the group $C\ell'_{n}$ of the $\ell$-classes of divisors of $K_{n}$. Let $H'_{\infty}=\bigcup H'_{n}$ and consider the Galois group $\mathcal{C}'=\Gal(H'_{\infty}/K^{c})$. Then $\mathcal{C}'$ is isomorphic to the projective limit $\varprojlim C\ell'_{n}$. Let $\Clog{n}$ be the logarithmic class group of $K_{n}$. Then for $n$ big enough we have 
$$\Clog{n}\isom \mathcal{C}'/\omega_{n}\mathcal{C}',$$
since $\omega_{n}\mathcal{C}'$ fixes the maximal abelian extension of $K_{n}$ which splits completely over $K_{\infty}$. 

The $\LL$-module $\mathcal{C}'$ is noetherian and torsion. Let $\mu'$ and $\lambda'$ be its structural invariants. Hence, Iwasawa theory yields that 
$$|\Clog{n}|=\ell^{\mu'\ell^{n}+\lambda' n + \tilde{\nu}}$$
for some $\tilde{\nu}\in\Z$ and $n$ big enough. Moreover, one can show that the $\mu$ invariant associated to the inverse limit $\varprojlim C\ell_{n}$ of the $\ell$-part of the class groups $C\ell_{n}$ of $K_{n}$ coincides with $\mu'$ and that $\lambda$ and $\lambda'$ differ just by some fixed factor $\lambda^{[\ell]}$ \cite[Prop. IV.2.5]{Jaulent86}, that is 
$$\mu=\mu'\	\	\	\text{ and }\	\	\	\lambda'=\lambda+\lambda^{[\ell]}.$$ 
 
\subsection{Topologies of $\Zl^{d}$-extensions}

Let $\Delta:=\Delta(K)$ be the set of all the $\Zl$-extensions of $K$. We will recall some topologies that can be defined on $\Delta$. 

\subsubsection{Greenberg's Topology}

Let $\Delta_{n}$ be the discrete topological space consisting of all cyclic extensions of $K$ of degree $\ell^{n}$ contained in some $\Zl$-extension $K_{\infty}$. We endow $\Delta$ with the topology induced by the inverse limit $\Delta = \varprojlim \Delta_{n}$ with the natural maps $\Delta_{m}\rightarrow \Delta_{n}$ for $m\geq n$. Hence $\Delta$ is a compact Hausdorff topological space with a basis of the topology given by
$$ \Delta(K_{\infty},n) := \{K'_{\infty} \in \Delta \,|\, [K_{\infty}\cap K'_{\infty} : K]\geq \ell^{n}\}$$
for $K_{\infty}\in \Delta$ and $n\in\N$.


\subsubsection{Kleine's Topology}\label{ss:ktop} Let $\Pl_{K}(\ell)$ be the set of primes of $K$ lying above $\ell$. For a $\Zl$-extension $K_{\infty}/K$ we define $P(K_{\infty })$ (resp. $\tilde{P}(K_{\infty})$) to be the subset of primes in $\Pl_{K}(\ell)$ that ramify (resp. log-ramify) in $K_{\infty}/K$. 

For every $\Zl$-extension $K_{\infty}$ of $K$ and $n\in \N$ we define
$$\Sigma(K_{\infty},n)=\{K'_{\infty}\in \Delta(K_{\infty},n)\,|\, P(K'_{\infty })\subseteq P(K_{\infty }) \}.$$
Analogously we define $\tilde{\Sigma}(K_{\infty},n)$. 

The sets $\Sigma(K_{\infty},n)$ (resp. $\tilde{\Sigma}(K_{\infty},n)$) generate a topology on $\Delta$, which we call Kleine's topology (resp. log-Kleine's topology).

\begin{remark} \begin{itemize}
\item[(1)] The set $\Delta(K)$ is not compact with Kleine's topology (resp. log-Kleine's topology).
\item[(2)] If the Gross-Kuz'min conjecture holds for every number field $L$ contained in $K_{d}$, then in the log-Kleine's topology the cyclotomic $\Zl$-extension $K^{c}$ of $K$ is eventually isolated from any other $\Zl$-extension of $K$ \cite[Lemma 2.6]{Villanueva18}. 
\end{itemize}

\end{remark}

\section{The logarithmic module}\label{Sec:LogMod}

Let $K_{d}$ be a $\Zl^{d}$-extension of $K$, let $L_{d}$ be the maximal abelian logarithmically unramified $\ell$-extension of $K_{d}$. Let $X_{d}=\Gal(L_{d}/K_{d})$ and denote $G$ the Galois group $\Gal(L_{d}/K)$. As usual we consider $X_{d}$ as a $\LL_{d}$-module.

Before proving our main result, let us recall the following.

\begin{lemma}\label{lem:Nakzld} If $N$ is a Galois pro-$\ell$-extension of $K$ such that $K_{d}\subseteq N$ and $\Gal(N/K_{d})$ is abelian, then $\Gal(N/K_{d})$ is a noetherian $\LL_{d}$-module if and only if $\Gal(N_{0}/K)$ is a noetherian $\Zl$-module, where $N_{0}$ denotes the maximal abelian extension of $K$ contained in $N$.
\end{lemma}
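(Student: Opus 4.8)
Write $G=\Gal(N/K)$ and $Y=\Gal(N/K_{d})$, so that $Y$ is a closed normal subgroup of the pro-$\ell$ group $G$ with $G/Y\isom\Gamma_{d}$. Since $N/K$ is pro-$\ell$, $Y$ is a compact $\Zl$-module, and since $Y$ is abelian the conjugation action of $G$ on $Y$ factors through $\Gamma_{d}$ and makes $Y$ a compact $\LL_{d}$-module. My plan is to run the topological Nakayama lemma on each side of the asserted equivalence and to bridge the two residual quotients by a short commutator computation.

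The first ingredient is the topological Nakayama lemma: a compact module $M$ over a noetherian compact local ring $(R,\mathfrak{m})$ with finite residue field is finitely generated over $R$ if and only if $M/\mathfrak{m}M$ is finite (see \cite[Ch.~V]{Neukirch&Schmidt&Wingberg08}); recall also that over a noetherian ring ``finitely generated'' and ``noetherian'' coincide. Put $I_{d}=(T_{1},\dots,T_{d})\LL_{d}$. Since $\M=(\ell)+I_{d}$ we have $Y/\M Y\isom (Y/I_{d}Y)\otimes_{\Zl}\F_{\ell}$, so applying Nakayama first over $\LL_{d}$ and then over $\Zl$ gives
$$Y\ \text{noetherian over}\ \LL_{d}\ \iff\ Y/\M Y\ \text{finite}\ \iff\ Y/I_{d}Y\ \text{noetherian over}\ \Zl.$$
It therefore remains to compare the $\Zl$-module $Y/I_{d}Y$ with $\Gal(N_{0}/K)=G/\overline{[G,G]}$.

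For this I would fix lifts $\widetilde{\gamma}_{1},\dots,\widetilde{\gamma}_{d}\in G$ of the chosen generators of $\Gamma_{d}$. As $K_{d}\subseteq N_{0}$ we have $\overline{[G,G]}\subseteq Y$, and expanding commutators (using that $Y$ is abelian and normal, so $G$ acts on it through $\Gamma_{d}$) shows that $\overline{[G,G]}$ is the closed subgroup of $Y$ generated by the elements $[\widetilde{\gamma}_{i},y]=T_{i}\cdot y$ for $y\in Y$ together with the finitely many elements $c_{ij}=[\widetilde{\gamma}_{i},\widetilde{\gamma}_{j}]$; thus $\overline{[G,G]}=I_{d}Y+\sum_{i<j}\LL_{d}c_{ij}$. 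Because $\Gamma_{d}$ acts trivially on $Y/I_{d}Y$, the image there of $\sum_{i<j}\LL_{d}c_{ij}$ is merely the $\Zl$-span of the classes $\overline{c}_{ij}$, so $\overline{[G,G]}/I_{d}Y$ is finitely generated over $\Zl$. Feeding this into the exact sequences of compact $\Zl$-modules
$$0\to \overline{[G,G]}/I_{d}Y\to Y/I_{d}Y\to Y/\overline{[G,G]}\to 0\quad\text{and}\quad 0\to Y/\overline{[G,G]}\to \Gal(N_{0}/K)\to\Gamma_{d}\to 0,$$
and using that $\Gamma_{d}\isom\Zl^{d}$ and $\overline{[G,G]}/I_{d}Y$ are finitely generated, while finite generation over the noetherian ring $\Zl$ is inherited by submodules, quotients and extensions, one obtains that $Y/I_{d}Y$ is finitely generated over $\Zl$ if and only if $\Gal(N_{0}/K)$ is. Chaining this with the displayed equivalences above proves the lemma.

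The genuinely substantive inputs are the topological Nakayama lemma — which is where the compactness of $Y$, hence the pro-$\ell$ hypothesis, is used — and the commutator identity $\overline{[G,G]}=I_{d}Y+\sum_{i<j}\LL_{d}c_{ij}$; for the latter one must check that no generators beyond the $[\widetilde{\gamma}_{i},y]$ and the $c_{ij}$ are needed, which follows from the standard commutator calculus once one observes that $Y/I_{d}Y$ is central in $G/I_{d}Y$. Everything else is formal homological algebra over the noetherian rings $\LL_{d}$ and $\Zl$, so I expect the Nakayama step and this commutator bookkeeping to be the only places requiring real care.
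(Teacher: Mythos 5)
Your argument is correct and is, in substance, exactly the proof the paper delegates to (Greenberg's Theorem 1 in \cite{Greenberg73IwasawaInvariants}): apply the topological Nakayama lemma over $\LL_{d}$ and over $\Zl$, and bridge $Y/I_{d}Y$ to $G/\overline{[G,G]}$ via the observation that $Y/I_{d}Y$ is central in $G/I_{d}Y$, so that $\overline{[G,G]}$ is $I_{d}Y$ together with the finitely many $\Zl$-span contributions of the $c_{ij}=[\widetilde{\gamma}_{i},\widetilde{\gamma}_{j}]$. The paper gives no independent proof, citing Greenberg verbatim, so your reconstruction faithfully reproduces the intended argument.
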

\begin{proof} See proof of \cite[Thm 1.]{Greenberg73IwasawaInvariants}.
\end{proof}

Now we state our main result.

\begin{theorem}\label{thm:Xlognoetors} 
Let $K_{d}$ be a $\Zl^{d}$-extension and $L_{d}$ as above. Then the $\LL_{d}$-module $X_{d}$ is noetherian. Moreover, if the Gross-Kuz'min conjecture holds for every subextension $K_{n}$ of some $\Zl$-extension $K_{\infty}$ then $X_{d}$ is $\LL_{d}$-torsion.
\end{theorem}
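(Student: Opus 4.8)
The plan is to mimic the classical argument of Greenberg (Theorem 1 of \cite{Greenberg73IwasawaInvariants}) as packaged in Lemma \ref{lem:Nakzld}, and then to handle torsion separately by a counting/rank argument that is where the Gross–Kuz'min hypothesis enters. First I would establish noetherianity. By Lemma \ref{lem:Nakzld} applied to $N=L_{d}$ (which is a pro-$\ell$ extension of $K$ containing $K_{d}$ with $\Gal(L_{d}/K_{d})=X_{d}$ abelian), it suffices to show that $\Gal(L_{0}/K)$ is a noetherian $\Zl$-module, where $L_{0}$ is the maximal abelian subextension of $K$ inside $L_{d}$. Now $L_{0}/K$ is an abelian $\ell$-extension which is logarithmically unramified (it sits inside $L_{d}$, which is log-unramified over $K_{d}$, and log-ramification is multiplicative in towers by \cite[Thm. 1.4]{Jaulent94}, so log-unramifiedness over $K_{d}$ plus log-unramifiedness of $K_{d}/K$ away from the relevant primes forces control at the bottom). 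Hence $L_{0}\subseteq K^{\lc}$, and $\Gal(K^{\lc}/K)=\Clog{K}^{*}$ is a noetherian $\Zl$-module (it is finitely generated: a quotient of the compact $\Zl$-module $\Rc=\prod_{\p}\Rc_{\p}$ modulo global units, and one reduces to finitely many primes). Therefore $\Gal(L_{0}/K)$ is noetherian over $\Zl$, and Lemma \ref{lem:Nakzld} gives that $X_{d}$ is a noetherian $\LL_{d}$-module.

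For torsion I would argue that a noetherian $\LL_{d}$-module is torsion precisely when its rank is zero, equivalently when for a cofinal family of subextensions the $\Zl$-corank of the corresponding finite-layer quotients stays bounded. Concretely, fix any $\Zl$-extension $K_{\infty}\subseteq K_{d}$ and consider the tower $K_{n}\subseteq K_{\infty}$; I would compare $X_{d}$ with the logarithmic module $\Gal(K_{\infty}^{\lc}/K_{\infty})$ via the quotient construction $X_{d}\mapsto (X_{d})_{\pi}$ for the projection $\pi:\LL_{d}\to\LL$ attached to $K_{\infty}$, and show that $(X_{d})_{\pi}$ surjects onto (a finite-index submodule of) $\Gal(K_{\infty}^{\lc}/K_{\infty})$. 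The key input is the cyclotomic-case analysis of \S\ref{sec:thecyclocase}: for the layers $K_{n}$ one has $\Clog{n}\isom\mathcal{C}'/\omega_{n}\mathcal{C}'$ with $\mathcal{C}'$ noetherian torsion, and Gross–Kuz'min for all $K_{n}$ says exactly that $\Clog{n}$ is finite for all $n$, i.e. the relevant $\Zl$-corank is $0$ in each layer. Feeding this boundedness back up through the $d$-variable structure — using that $\Clog{K_d}^{*}$ has $\Zl$-rank $1$ under Gross–Kuz'min, just as in Lemma \ref{lem:logramZlext} — forces the characteristic-type dimension of $X_{d}$ to drop, so $X_{d}$ has no free part over $\LL_{d}$ and is torsion.

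The main obstacle, I expect, is the torsion half, and specifically making rigorous the "descent" that converts Gross–Kuz'min \emph{for every $K_{n}$ in one $\Zl$-extension} into a statement about the full $\Zl^{d}$-module $X_{d}$. One has to be careful that $L_{d}$ is the maximal log-unramified extension of $K_{d}$, which is strictly larger than the compositum of the $K_{\infty}^{\lc}$, so a naive comparison of invariants is not enough; the clean way is to show that $X_{d}$, being noetherian, has $\Zl$-rank governed by $\Gal(L_{0}/K)=\Clog{K}^{*}$-type data at finite level, and then invoke that a noetherian $\LL_{d}$-module whose "specialisations" $(X_{d})_{\pi}$ are all $\LL$-torsion must itself be $\LL_{d}$-torsion (a standard fact about $\LL_{d}$ obtained from the structure theorem, Theorem \ref{thm:structure}, by localising at height-one primes). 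The Gross–Kuz'min hypothesis is used exactly once, to guarantee that these specialisations are torsion; without it one only gets noetherianity.
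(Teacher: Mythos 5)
Your proposal takes the same overall outline as the paper (noetherianity via Greenberg's Lemma \ref{lem:Nakzld}, then torsion via specialisation to a well-chosen $\Zl$-subextension), but both halves contain genuine errors.

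\textbf{Noetherianity.} The pivotal claim that $L_{0}/K$ is logarithmically unramified — hence $L_{0}\subseteq K^{\lc}$ — is false. Since $K_{d}/K$ is abelian, $K_{d}\subseteq L_{0}$, and by Lemma \ref{lem:logramZlext} at least one prime above $\ell$ log-ramifies in $K_{d}/K$; therefore $L_{0}/K$ is log-ramified. (More concretely, $\Gal(L_{0}/K)$ has $\Zl$-rank $\geq d$ while $\Gal(K^{\lc}/K)=\Clog{K}^{*}$ has $\Zl$-rank $1$ under Gross–Kuz'min — a clear incompatibility for $d\geq 2$.) The paper sidesteps this by a weaker but correct inclusion: Lemma \ref{lem:logramlram} says log-unramified $\Rightarrow$ unramified outside $\ell$, so $L_{0}$ sits inside the \emph{classical} maximal abelian $\ell$-ramified $\ell$-extension $M_{0}$ of $K$; $\Gal(M_{0}/K)$ is the standard noetherian $\Zl$-module, its quotient $\Gal(L_{0}/K)$ is noetherian, and then Lemma \ref{lem:Nakzld} applies.

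\textbf{Torsion.} Your reduction is not closed: you produce a surjection from $(X_{d})_{\pi}$ \emph{onto} a finite-index submodule of the $\LL$-torsion module $\Gal(K_{\infty}^{\lc}/K_{\infty})$, but a surjection onto a torsion module says nothing about the source being torsion — the kernel could carry a free $\LL$-part. The subsequent appeal to ``all specialisations torsion implies torsion'' therefore never gets off the ground, since you have not shown even one specialisation to be torsion, and you also invoke the faulty identification of $\Gal(L_{0}/K)$ with $\Clog{K}^{*}$-type data. The paper closes precisely this gap: it chooses $K_{\infty}\subseteq K_{d}$ so that no prime above $\ell$ that log-ramifies in $K_{d}$ splits completely in $K_{\infty}$ (guaranteed by Lemma \ref{lem:logramZlext}), then uses the five-term exact sequence for $0\to X_{d}\to G\to\Zl^{d-1}\to 0$ to bound $X_{d}/\mathfrak{A}X_{d}$ by $G/[G,G]$, and finally decomposes $G/[G,G]$ by $0\to\tilde{I}\to G/[G,G]\to\widetilde{X}\to 0$, where $\tilde{I}$ is $\Zl$-finitely generated (because of the careful choice of $K_{\infty}$, only finitely many log-inertia subgroups intervene) and $\widetilde{X}=\Gal(K_{\infty}^{\lc}/K_{\infty})$ is $\LL$-torsion under Gross–Kuz'min. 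Both control facts — the finite generation of $\tilde{I}$ and the five-term sequence — are essential and absent from your plan.
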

\begin{proof} By Lemma \ref{lem:logramlram} the maximal abelian $\ell$-extension $L_{d,0}$ of $K$ containing $K_{d}$ which is logarithmically unramified over $K_{d}$ is contained in $M_{0}$, the maximal abelian $\ell$-extension of $K$ which is $\ell$-ramified over $K_{d}$. Since $\Gal(M_{0}/K)$ is a noetherian $\Zl$-module. Then $\Gal(L_{d,0}/K)$ is also a noetherian $\Zl$-module. Lemma \ref{lem:Nakzld} implies that $\Gal(L_{d}/K_{d})$ is a noetherian $\LL_{d}$-module.

In order to prove the torsion property suppose $d\geq 2$, otherwise we know that the result is just Proposition 3.1 in \cite{Villanueva18} in the non-cyclotomic case and for the cyclotomic $\Zl$-extension this follows from Section \ref{sec:thecyclocase}.

Let $\p_{1},\ldots,\p_{s}$ be the primes of $K$ above $\ell$ that do not split completely in $K_{d}$. This set is non-empty by Lemma \ref{lem:logramZlext}. If $d=2$, it is easy to see that there exists a $\Zl$-extension $K_{\infty}$ of $K$ contained in $K_{d}$, in which none of the $\p_{1},\ldots,\p_{s}$ is completely decomposed. An inductive argument shows that one can always find such a $\Zl$-extension for $d\geq2$.

Let $G=\Gal(L_{d}/K_{\infty})$. We have the short exact sequence of $\Zl$-modules
$$0\rightarrow X_{d} \rightarrow G \rightarrow \Zl^{d-1}\rightarrow 0.$$
Taking homology with coefficients in $\Z$ we obtain the exact sequence
$$H_{2}(\Zl^{d-1},\Z)\rightarrow X_{d}/\mathfrak{A}X_{d}\rightarrow G/[G,G] \rightarrow \Zl^{d-1}\rightarrow 0.$$
Here $\mathfrak{A}$ is the kernel of the map $\Gal(K_{d}/K)\rightarrow \Gal(K_{\infty}/K)$. We have that $H_{2}(\Zl^{d-1},\Z)$ is trivial since $\Zl^{d-1}$ is free. It suffices to prove that $G/[G,G]$ is $\LL$-torsion, for if $X_{d}/\mathfrak{A}X_{d}$ is $\LL$-torsion then $X_{d}$ is $\LL_{d}$-torsion. 

Let $\tilde{I}$ be the subgroup of $G/[G,G]$ generated by the logarithmic inertia subgroups if all places of $K_{\infty}$ above $\p_{1},\ldots,\p_{s}$. Notice that there is a finite number of such logarithmic inertia subgroups. $\tilde{I}$ is of finite type over $\Zl$. Then we have the following exact sequence
$$0\rightarrow \tilde{I} \rightarrow G/[G,G] \rightarrow \widetilde{X} \rightarrow 0,$$
where $\widetilde{X}$ is the Galois group of a maximal abelian $\ell$-extension of $K_{\infty}$ which is logarithmically unramified. Hence $\widetilde{X}$ is a noetherian $\LL$-module, and if the Gross-Kuz'min conjecture holds in the finite layers $K_{n}$ of $K_{\infty}$, then is torsion.
\end{proof}

\section{Topological behavior of logarithmic invariants}\label{Sec:LGM}

\begin{definition} Let $K_{\infty}$ be a $\Zl$-extension of $K$. Recall that a place $\p$ of $K$ splits finitely in the tower $K_{\infty}/K$ if its decomposition group in $\Gal(K_{\infty}/K)$ is open. We say that $K_{\infty}$ splits finitely if very place $\p$ above $\ell$ splits finitely.
\end{definition}

Let $\Delta^{0}$ be the subset of $\Delta(K)$ consisting of the $\Zl$-extensions of $K$ that split finitely. We know that $\Delta^{0}$ is a dense subset of $\Delta(K)$ \cite[Prop. 3]{Greenberg73IwasawaInvariants}. 

The following theorem has been proven by Greenberg in the classical case. It has a local nature and its probably the precursor of the use of topological methods to deduce information on the structural invariants of torsion modules over Iwasawa algebras. 

\begin{theorem}\label{thm:mulaboun} Let $K_{\infty}$ be a $\Zl$-extension that splits finitely, i.e. $K_{\infty}$ is in $\Delta^{0}$. Let $\tilde{\mu}(K_{\infty})$ and $\tilde{\lambda}(K_{\infty})$ be the structural invariants of its logarithmic module $X$. Then with respect to Greenberg's topology:
\begin{itemize}
\item[(i)] The invariant $\tilde{\mu}$ is bounded in a neighborhood of $K_{\infty}$.
\item[(ii)] If $\tilde{\mu}(K_{\infty})=0$ then the invariants $\tilde{\mu}$ and the $\tilde{\lambda}$ are respectively zero and bounded in a neighborhood of $K_{\infty}$. 
\end{itemize}
\end{theorem}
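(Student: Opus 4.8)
The plan is to imitate Greenberg's original argument for the classical $\mu$ and $\lambda$ invariants, replacing the unramified module by the logarithmically unramified module $X$ and using the results already assembled in the excerpt. Throughout, fix $K_\infty \in \Delta^0$ with generator $\gamma$ of $\Gamma = \Gal(K_\infty/K)$, and let $\omega_n = (1+T)^{\ell^n} - 1$ under the usual identification $\LL \isom \Zl\llbracket T\rrbracket$. The starting point is to realise the finite logarithmic class groups $\Clog{n}$ of the layers $K_n$ as quotients of $X$ by (roughly) $\omega_n X$ plus a correction coming from the logarithmic inertia and decomposition at the finitely many primes above $\ell$; because $K_\infty$ splits finitely, these decomposition contributions are controlled by a \emph{fixed} amount independent of $n$, so that $|\Clog{n}| = \ell^{\et_n}$ with $\et_n = \mut \ell^n + \lat n + \tilde\nu$ for $n \gg 0$ and the structural invariants of $X$.

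First I would make the local comparison precise. For a second $\Zl$-extension $K'_\infty$ lying in the basic open set $\Delta(K_\infty,n)$, the layers $K'_m$ and $K_m$ agree for $m \le n$; hence the logarithmic class groups of the low layers coincide, and more importantly the logarithmic inertia/decomposition data at the primes over $\ell$ agree up to level $n$. The key input is Lemma~\ref{lem:logramZlext}'s philosophy localised at $\ell$: only the primes above $\ell$ matter, and there are finitely many of them. So for $K'_\infty$ close enough to $K_\infty$ one gets a surjection, with kernel of bounded $\Zl$-corank, from a module built out of $X'=\Gal(K'^{\lc}_\infty/K'_\infty)$ onto $\Clog{n}(K'_m)$ for the relevant $m$. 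Combined with the Iwasawa-type growth formula for $K'_\infty$ (which holds by \cite{Villanueva18}, recalled in the introduction), this forces an inequality of the form $\mut(K'_\infty)\,\ell^m + \lat(K'_\infty)\, m + \tilde\nu' \le |\Clog{m}(K'_m)|_\ell \le (\text{something growing like } \mut(K_\infty)\ell^m)$ for all $m$ in a suitable range, which after letting the range grow with $n$ pins down $\mut(K'_\infty) \le \mut(K_\infty)$. That gives (i).

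For (ii), assume $\mut(K_\infty)=0$. Then $X$ (or rather its relevant quotient at the $\ell$-primes) is a finitely generated $\Zl$-module, and the same near-identification of low-layer logarithmic class groups shows that for $K'_\infty \in \Delta(K_\infty,n)$ the group $\Clog{n}(K'_n)$ has $\ell$-rank bounded independently of $K'_\infty$; since $\mut(K'_\infty)$ was already shown to vanish (it is $\le \mut(K_\infty)=0$), the growth formula $\et'_m = \lat(K'_\infty) m + \tilde\nu'$ together with the bound on $|\Clog{m}(K'_m)|$ for $m$ up to $\approx n$ forces $\lat(K'_\infty)$ to be bounded. The one genuinely delicate point, and the step I expect to be the main obstacle, is controlling the logarithmic inertia contribution uniformly: one must check that when $K_\infty$ splits finitely, the logarithmic inertia subgroups $\tilde I_\p$ at the primes $\p \mid \ell$ in the tower — which by the $\ell$-adic class field theory description are images of the $\widetilde U_\p$ — contribute only a \emph{fixed} finite amount to the discrepancy between $X/\omega_n X$ and $\Clog{n}$, uniformly as $K'_\infty$ varies in a small neighbourhood, even though logarithmic ramification is subtler than classical ramification (Remark~\ref{rem:logram}). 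This requires a careful analysis of the behaviour of $\widetilde U_\p$ along the tower, using that $\widetilde v_\p$ differs from $v_\p$ only by a bounded $p$-adic correction at primes above $\ell$, and it is here that the "splits finitely" hypothesis does the real work, exactly as the analogous hypothesis does in Greenberg's classical proof.
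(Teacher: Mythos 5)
Your proposal takes a genuinely different route from the paper's, and in its present form it has a gap that stops it from being a proof.

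The paper's argument for (i) and for the vanishing of $\tilde\mu$ in (ii) is much shorter than what you attempt. It rests on two facts: first, Theorem~\ref{thm:Xlognoetors}, that $X_d=\Gal(L_d/K_d)$ is a noetherian torsion $\LL_d$-module; and second, the equality $\tilde\mu(K_\infty)=\mu(K_\infty)$ for $K_\infty\in\Delta^0$, cited from \cite[Thm.~4.8]{Villanueva18}. With the identity $\tilde\mu=\mu$ in hand, local boundedness of $\tilde\mu$ in Greenberg's topology is literally a restatement of Greenberg's classical Theorems~2 and~3 from \cite{Greenberg73IwasawaInvariants}. For the $\tilde\lambda$-part of (ii) the paper says to re-run Greenberg's proof of his Theorem~3, replacing the unramified by the logarithmically unramified module, and here the torsion property of $X_d$ over $\LL_d$ (i.e.\ Theorem~\ref{thm:Xlognoetors}) does the structural work. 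You do not use the identity $\tilde\mu=\mu$ at all, and you never invoke the $\LL_d$-module $X_d$ attached to a $\Zl^d$-extension containing $K_\infty$; you work exclusively with the individual $\LL$-modules $X$, $X'$ of the $\Zl$-extensions $K_\infty$, $K'_\infty$.

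This is where the gap sits. Your central claim is that agreement of $K'_\infty$ and $K_\infty$ up to layer $n$, hence agreement of the groups $\Clog{m}$ for $m\le n$, ``pins down'' $\tilde\mu(K'_\infty)\le\tilde\mu(K_\infty)$. But knowing a finite initial segment of the sequence $|\Clog{m}(K'_m)|$ gives no control over the asymptotic invariants $\tilde\mu(K'_\infty)$ or $\tilde\lambda(K'_\infty)$: the $n_0$ past which the Iwasawa formula $\et'_m=\tilde\mu'\ell^m+\tilde\lambda'm+\tilde\nu'$ holds can a priori grow with $K'_\infty$, and the low-layer sizes alone do not bound it. Greenberg's proof avoids this precisely by passing to the ambient $\LL_d$-module $X_d$ and comparing each $X(K'_\infty)$, up to bounded defect, to a specialization $X_d/\mathfrak{A}_\pi X_d$; the noetherian torsion structure of $X_d$ is what yields a single characteristic power series whose specializations control the invariants over the whole neighbourhood. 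Your sketch never introduces $X_d$ and never makes the comparison $X/\omega_nX\leftrightarrow\Clog{n}$ precise, so the bounded-defect step is missing. You also correctly flag the uniformity of the logarithmic inertia contribution as ``the main obstacle,'' but that step is left as a to-do rather than carried out; and it is exactly here that the splits-finitely hypothesis and the $\LL_d$-module structure combine in Greenberg's argument. To repair the proposal you would either need to develop the specialization machinery for $X_d$ as in \cite[Thms.~1--3]{Greenberg73IwasawaInvariants}, or take the paper's shortcut and reduce to the classical statement via $\tilde\mu=\mu$.
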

\begin{proof}
(i) follows directly from Theorem \ref{thm:Xlognoetors}  and the fact that the classical invariant $\mu$ and its logarithmic conterpart coincide (see \S 2.5 and \textbf{\cite[Thm. 4.8]{Villanueva18}}). For (ii) the first part follows for the same reasons commented before. However to show that $\tilde{\lambda}$ is bounded we must proceed as in the proof of \cite[Thm. 3]{Greenberg73IwasawaInvariants}. \end{proof}

In recent years, S\"oren Kleine proved in \cite{Kleine17} a version of \ref{thm:mulaboun} using a different approach from that of Greenberg. In particular two things are essential in his results: the topology described in section \ref{ss:ktop} and the generalization of a result of Fukuda. Despite of the fact that Kleine's topology is finer than Greenberg's topology, the main advantage of Kleine's technique is to get rid of the splitting assumption, namely the restriction to $\Delta^{0}$.

It is natural to consider Kleine's topology adapted to logarithmic arithmetic, i.e. taking into account logarithmic ramification. Indeed, a straightforward rephrase of Kleine's results gives us.

\begin{theorem}\label{thm:mulabounktop} Let $K_{\infty}$ be a $\Zl$-extension. Let $\tilde{\mu}(K_{\infty})$ and $\tilde{\lambda}(K_{\infty})$ be the structural invariants of its logarithmic module $X$. Then with respect to log-Kleine's topology:
\begin{itemize}
\item[(i)] The invariant $\tilde{\mu}$ is bounded in a neighborhood of $K_{\infty}$.
\item[(ii)] If $\tilde{\mu}(K_{\infty})=0$ then the invariants $\tilde{\mu}$ and the $\tilde{\lambda}$ are respectively zero and bounded in a neighborhood of $K_{\infty}$. 
\end{itemize}
\end{theorem}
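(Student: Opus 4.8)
The plan is to obtain Theorem \ref{thm:mulabounktop} as a formal consequence of Kleine's machinery from \cite{Kleine17}, transported verbatim into the logarithmic setting. The key observation is that every ingredient Kleine uses has a logarithmic analogue already established in this paper: the logarithmic module $X$ attached to a $\Zl$-extension is a noetherian torsion $\LL$-module (Theorem \ref{thm:Xlognoetors}, together with \S\ref{sec:thecyclocase} for the cyclotomic case), the logarithmic $\mu$-invariant agrees with the classical one (\S 2.5 and \cite[Thm. 4.8]{Villanueva18}), and log-ramification satisfies the same tower formulae and finiteness properties as classical ramification (\cite[Thm. 1.4]{Jaulent94} and Lemma \ref{lem:logramlram}). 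First I would recall precisely the structure of Kleine's argument: he considers, for $K'_\infty\in\tilde\Sigma(K_\infty,n)$, the comparison between the Iwasawa modules of $K_\infty$ and $K'_\infty$ by passing through a common finite layer $K_n=K'_n$, and he controls the difference using a generalized Fukuda-type stabilization lemma which says that once the relevant class number quotients (here: logarithmic class number quotients) stabilize at one finite layer, they stabilize forever. The condition $P(K'_\infty)\subseteq P(K_\infty)$ (resp. $\tilde P(K'_\infty)\subseteq\tilde P(K_\infty)$) is exactly what guarantees that no new (log-)ramification is introduced when one perturbs $K_\infty$ within a basic open set, so the inertia contributions stay under control.

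Concretely, I would carry out the following steps. Step one: state and prove the logarithmic Fukuda-type lemma — if $\Clog{n+1}$ and $\Clog{n}$ have the same order (or the same $\ell$-rank, in the $\mu=0$ refinement) for one sufficiently large $n$, then $\mut(K_\infty)$ (resp. also $\lat(K_\infty)$) is already determined; this is the verbatim logarithmic analogue of Fukuda's theorem and of Kleine's Theorem in \cite{Kleine17}, and its proof copies the classical one using that $\Clog{n}\isom \mathcal C'/\omega_n\mathcal C'$ for $n$ large (as recorded in \S\ref{sec:thecyclocase}) together with the structure theorem for the noetherian torsion $\LL$-module $\mathcal C'$. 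Step two: given $K_\infty$, fix $n$ large enough that the stabilization of step one has occurred for $K_\infty$, and show that for every $K'_\infty\in\tilde\Sigma(K_\infty,n)$ one has $K'_n=K_n$, hence $\Clog{K'_n}=\Clog{K_n}$, and that the log-inertia subgroups of primes above $\ell$ in $K'_\infty/K$ are controlled because $\tilde P(K'_\infty)\subseteq\tilde P(K_\infty)$; combining this with Lemma \ref{lem:logramlram} (to pass between log-ramification and ramification away from $\ell$, which plays no role here since everything relevant sits above $\ell$) gives a uniform bound on $\mut(K'_\infty)$, and under $\mut(K_\infty)=0$ a uniform bound on $\lat(K'_\infty)$ with $\mut(K'_\infty)=0$. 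Step three: assemble these into the statements (i) and (ii), noting that $\tilde\Sigma(K_\infty,n)$ is by definition a neighborhood of $K_\infty$ in log-Kleine's topology.

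The main obstacle I anticipate is not conceptual but a matter of bookkeeping: one must check that Kleine's proof genuinely only uses the formal properties listed above and does not secretly invoke something special about classical class groups — in particular that his use of the module-theoretic comparison lemmas (controlling $\lat$ via the number of generators of the relevant quotient modules, and via the behaviour of $\omega_n$-quotients) goes through with $\mathcal C'$ (equivalently with the logarithmic module $X$) in place of the classical Iwasawa module, and that the ramification-theoretic inputs about primes above $\ell$ survive when ``ramified'' is replaced by ``log-ramified.'' The delicate point is the interplay between $P(K_\infty)$ and $\tilde P(K_\infty)$: these sets can differ, so one must be careful to use $\tilde\Sigma$ consistently and to verify that the log-inertia groups — rather than the classical inertia groups — are what appear in the relevant exact sequences (as in the proof of Theorem \ref{thm:Xlognoetors}, where $\tilde I$ rather than $I$ is used). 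Once that consistency check is in place, (i) and (ii) follow by the same final argument as in \cite{Kleine17}, and I would simply indicate the dictionary rather than reproduce the proof in full.
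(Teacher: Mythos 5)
Your proposal takes essentially the same route as the paper, which in fact records no proof at all: it simply asserts that ``a straightforward rephrase of Kleine's results gives us'' the theorem, and your roadmap — a logarithmic Fukuda-type stabilization lemma, descent through a common finite layer $K_n = K'_n$, and control of log-inertia via the inclusion $\tilde{P}(K'_\infty)\subseteq\tilde{P}(K_\infty)$ — is exactly what that rephrasing must contain. One caution on your list of ingredients: the identity $\mut=\mu$ from \cite[Thm.\ 4.8]{Villanueva18} is established only for $K_\infty\in\Delta^0$, and the whole point of passing to log-Kleine's topology is to drop that restriction, so the transported Kleine argument must not lean on it; your Step 1 correctly avoids this by working internally with the noetherian torsion $\LL$-module and its $\omega_n$-quotients, but you should delete the appeal to $\mut=\mu$ from your preliminary dictionary so the argument does not appear circular.
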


The next question, raised by Greenberg, is whether there is a global bound of such invariants. The answer has been positively answered in the classical case independently by Baba\u{\i}cev \cite{Babauicev80} and Monsky \cite{Monsky81SomeInvariantsZpdExt}.  


\begin{theorem} Let $K$ be a number field. Assume the Gross-Kuz'min conjecture as in Theorem \ref{thm:Xlognoetors}. Then $\tilde{\mu}(K_{\infty})$ is bounded for every $K_{\infty}\in \Delta(K)$. 
\end{theorem}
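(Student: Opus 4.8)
The plan is to reduce the global boundedness of $\tilde{\mu}$ over $\Delta(K)$ to the already–established noetherianity and torsion properties of the logarithmic module $X_{d}$ attached to the compositum $K_{d}$ of all $\Z_\ell$-extensions of $K$. Recall that for each $K_{\infty}\in\Delta(K)$ there is a surjection $\Gamma_{d}=\Gal(K_{d}/K)\twoheadrightarrow\Gal(K_{\infty}/K)\isom\Z_\ell$, hence an epimorphism $\pi\colon\LL_{d}\twoheadrightarrow\LL$ with kernel $\mathfrak{A}_{\pi}$, and the $\LL$-module $(X_{d})_{\pi}=X_{d}/(\mathfrak{A}_{\pi}X_{d})$ surjects onto the logarithmic module $X=X(K_{\infty})=\Gal(K^{\lc}_{\infty}/K_{\infty})$ up to a finitely generated $\Z_\ell$-error coming from logarithmic inertia of the primes above $\ell$ (exactly as in the proof of Theorem \ref{thm:Xlognoetors}). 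Since $\mu$ of a noetherian torsion $\LL$-module only increases under quotient and only increases under adding finitely generated $\Z_\ell$-submodules by a bounded amount, it suffices to bound $\mu\bigl((X_{d})_{\pi}\bigr)$ uniformly in $\pi$.

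First I would invoke Theorem \ref{thm:Xlognoetors}: under the Gross–Kuz'min hypothesis, $X_{d}$ is a noetherian torsion $\LL_{d}$-module. By the structure theorem (Theorem \ref{thm:structure}) it is pseudo-isomorphic to $\bigoplus_{i}\LL_{d}/P_{i}^{r_{i}}$ for finitely many height-one primes $P_{i}$. The $\mu$-invariant of a specialization $(X_{d})_{\pi}$ is governed by how the height-one prime $\ell\LL$ of $\LL$ pulls back under $\pi$, i.e. by the part of $X_{d}$ supported at primes $P_{i}$ lying over $\ell$. The crucial point is a finiteness statement: there are only finitely many height-one primes $P$ of $\LL_{d}$ that can specialize, via \emph{some} epimorphism $\pi\colon\LL_{d}\to\LL$, into (a power of) the prime $\ell\LL$; equivalently, only finitely many $P_i$ in the decomposition of $X_d$ contribute to $\mu$ of any specialization, and each contributes a bounded amount $r_{i}\cdot[\text{something}]$. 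This is the Babai\u cev–Monsky mechanism transplanted to the logarithmic setting: one uses that $\ell\LL_{d}$ is the unique height-one prime of $\LL_{d}$ containing $\ell$, and that $\pi(\ell\LL_{d})=\ell\LL$, so the $\mu$-contribution of $(X_{d})_{\pi}$ comes only from the $\ell$-torsion part $X_{d}[\ell^{\infty}]$-type piece, whose $\LL_{d}$-corank-type measure is a fixed finite number independent of $\pi$. Concretely, write $X_{d}\sim\bigl(\bigoplus_{i=1}^{s}\LL_{d}/\ell^{m_{i}}\bigr)\oplus(\text{$\ell$-torsion free part})$; then $\mu\bigl((X_{d})_{\pi}\bigr)\le\sum_{i=1}^{s}m_{i}+C$ where $C$ bounds the pseudo-isomorphism defect and the logarithmic-inertia error, and the right-hand side does not depend on $K_{\infty}$.

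The main obstacle I anticipate is making precise and rigorous the claim that specialization does not \emph{increase} $\mu$ and that the $\ell$-torsion-free part of $X_{d}$ contributes nothing to $\mu$ of any $\Z_\ell$-specialization. In the classical case this rests on Monsky's analysis of how distinguished-polynomial-type primes $P_i$ (those with $P_i\not\ni\ell$) behave under the substitution homomorphisms $T_j\mapsto$ (linear forms in one variable $T$); generically the image of $P_i$ is nonzero and coprime to $\ell$, so $\LL_{d}/P_i^{r_i}$ specializes to a $\Z_\ell$-module of finite $\mu$, but one must control the finitely many bad specializations where the $\lambda$-part could blow up (this is where the analogue of Monsky's theorem genuinely enters, and where Greenberg's and Kleine's local results do not suffice). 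I would cite \cite{Monsky81SomeInvariantsZpdExt} and \cite{Babauicev80} for the module-theoretic input and check that nothing in their argument uses anything specific to the classical Iwasawa module beyond noetherianity and torsion over $\LL_{d}$ — both of which we have for $X_{d}$ by Theorem \ref{thm:Xlognoetors}. The remaining bookkeeping — controlling the pseudo-null defect and the finitely generated $\Z_\ell$-piece from logarithmic inertia above $\ell$, both uniformly in $\pi$ — is routine once the module-theoretic statement is in place.
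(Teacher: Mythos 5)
You take a genuinely different route from the paper. The paper's proof is an induction on $d$: for $K_{\infty}\in\Delta^{0}$ it invokes the arithmetic equality $\tilde{\mu}(K_{\infty})=\mu(K_{\infty})$ from \cite{Villanueva18} together with Babai\u{c}ev's classical global bound on $\mu$; for $K_{\infty}$ in which some $\mathfrak{p}\mid\ell$ log-ramifying in $K_{d}$ splits completely, it passes to the fixed field $K_{d}^{D_{\mathfrak{p}}}$, a $\Z_{\ell}^{d_{1}}$-extension with $d_{1}<d$, and appeals to the inductive hypothesis. Your argument bypasses both the equality $\tilde{\mu}=\mu$ and the classical theorem entirely, applying the Monsky--Babai\u{c}ev module-theoretic specialization bound directly to $X_{d}$, which Theorem \ref{thm:Xlognoetors} supplies as a noetherian torsion $\LL_{d}$-module. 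That is a cleaner, more self-contained, purely logarithmic route; notably it is \emph{not} the ``straightforward'' path the paper's closing remark contemplates (which still goes through $\tilde{\mu}=\mu$ in general), since you avoid that arithmetic input altogether. The cost is that you leave unverified the two steps you flag yourself: that Monsky's specialization analysis applies to an arbitrary noetherian torsion $\LL_{d}$-module (true, but the sources are phrased in class-group language, so the extraction of the purely module-theoretic statement needs to be done), and --- more delicately --- the comparison between $(X_{d})_{\pi}$ and $X(K_{\infty})$ when $K_{\infty}\notin\Delta^{0}$. In that case some $\mathfrak{p}\mid\ell$ has infinitely many places over it in $K_{\infty}$ and the logarithmic-inertia term is generated by infinitely many conjugate local subgroups; calling its control ``routine bookkeeping'' is too quick, and this is precisely the regime the paper treats separately by the decomposition-group reduction rather than by a control-theorem comparison.
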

\begin{proof} The result is clear when we restrict to $\Zl$-extensions, i.e. the case $d=1$. Now suppose the result holds for every $\Zl^{i}$-extension $M$ of $K$ with $i<d$. Let $K_{d}$ be an arbitrary $\Zl^{d}$-extension and let $X_{d}$ be the $\LL_{d}$-module $\Gal(L_{d}/K_{d})$ corresponding to $L_{d}$ the maximal abelian logarithmically unramified $\ell$-extension of $K_{d}$. As we have seen, $X_{d}$ is noetherian and assuming the Gross-Kuz'min conjecture it is torsion. We denote $\Delta(M/K)$ the set of $\Zl$-extensions of $K$ contained in $M$. Let $K_{\infty}\in \Delta(K_{d}/K)$. 

Assume that $K_{\infty}$ splits finitely. By Theorem 4.8 in \cite{Villanueva18} the classical $\mu$ invariant coincides with the logarithmic $\mut$ invariant. By Theorem 3.3 in \cite{Babauicev80}, we know that the classical $\mu$ invariants are bounded in $\Delta(K)$.

Let $\{\p_{1},\ldots,\p_{s}\}$ be the set of primes of $K$ above $\ell$ ramifying logarithmically in $K_{d}$. Assume that at least one of the $\p_{i}$, say $\p_{1}$ splits completely in some $\Zl$-extension $K_{\infty}$, otherwise $K_{\infty}$ would split finitely. Then the decomposition group $D_{\p_{1}}\subset \Gal(K_{d}/K)$ is such that
$$K_{\infty} \subseteq K_{d}^{D_{\p_{1}}}\	\	\	\text{ and }\	\	\	\Gal(K_{d}^{D_{\p_{1}}}/K)\isom \Zl^{d_{1}},$$
for some $d_{1}<d$ since $\p_{1}$ cannot split completely in $K_{d}/K$. Any other $\Zl$-extension $L$ of $\Delta(K)$, should be contained in one of the $K_{d}^{D_{\p_{i}}}$. By induction it follows that the $\mut$ invariant is bounded there as well. \end{proof}
  
In the proof of the preceding theorem we make use of the fact that the $\mut$ invariant equals its classic counterpart $\mu$ whenever $K_{\infty}$ is in $\Delta^{0}$. The proof of the theorem would be straightforward whenever this equality holds in general. Therefore it is very natural to ask whether this is the case. This is source of future work.

\bibliographystyle{amsplain}
\bibliography{Biblio}  
\end{document}